\documentclass[11pt]{amsart}
\usepackage[T1]{fontenc}
\usepackage{lmodern}
\usepackage[margin=1.15in]{geometry}
\usepackage{amsmath,amssymb,amsthm,booktabs,microtype}
\usepackage[hidelinks]{hyperref}
\hypersetup{pdftitle={Persistent Hilbert-function defects for products of powers of linear forms},
 pdfauthor={Boris Shapiro},pdfkeywords={Hilbert functions, power ideals, fat points, Waldschmidt constants}}

\newtheorem{theorem}{Theorem}[section]
\newtheorem{proposition}[theorem]{Proposition}
\newtheorem{corollary}[theorem]{Corollary}
\newtheorem{lemma}[theorem]{Lemma}
\theoremstyle{definition}

\newtheorem{question}[theorem]{Question}
\newcommand{\CC}{\mathbb C}

\newcommand{\PP}{\mathbb P}
\newcommand{\HF}{\operatorname{HF}}
\newcommand{\HS}{\operatorname{HS}}
\newcommand{\rank}{\operatorname{rank}}

\title[Generic $\mu$-power ideals, fat points, and symbolic powers]{Generic $\mu$-power ideals, fat points, and symbolic powers}
\author{Boris Shapiro}
\address{Department of Mathematics, Stockholm University, SE-106 91 Stockholm, Sweden}
\email{shapiro@math.su.se}
\date{}
\subjclass[2020]{Primary 13D40; Secondary 13N10, 14C20, 14N05}
\keywords{Hilbert function, powers of linear forms, fat points, Waldschmidt constant, Fr\"oberg's conjecture}

\begin{document}
\begin{abstract}
Multiplying powers of general linear forms by additional factors need not restore
the Hilbert series predicted for general forms. We give a quantitative criterion
for this failure in terms of a hypersurface with prescribed multiplicities at
the dual points. Under a strict degree--multiplicity inequality, the failure
occurs in every sufficiently large degree, even when the degrees of the additional
factors grow linearly with the degree of the generators. Moreover, the quotient
has dimension at least a constant times $d^{n-1}$ in a linearly growing interval
where the predicted Hilbert function is zero. The argument applies to arbitrary
additional factors, which may have different degrees on different generators.
The Waldschmidt constant determines the sharp asymptotic threshold for this criterion.
Conics and related plane curves give examples with five, six, seven, and eight
generators in three variables; a quadric gives examples with nine generators in
four variables. For five ternary generators the first non-pure failure occurs
in degree $14$, for the partition $(13,1)$, and its Hilbert series is exactly the
predicted series plus $t^{24}$. The obstruction is geometric; the assertion of
minimality is supported by a reproducible exact certificate.
\end{abstract}
\maketitle

\section{Introduction}

Let $S=\CC[x_1,\ldots,x_n]$, where $n\ge2$, and write
\[
 G_{n,d,r}(t)=\left[\frac{(1-t^d)^r}{(1-t)^n}\right]_+.
\]
Here the brackets mean truncation at the first non-positive coefficient.
Fr\"oberg's conjecture predicts this Hilbert series for the quotient by $r$
general forms of degree $d$ \cite{Fro85}. In three variables the prediction is a
theorem of Anick \cite{An86}.

For a partition $\mu=(\mu_1,\ldots,\mu_s)\vdash d$, a $\mu$-power form is a product
$L_1^{\mu_1}\cdots L_s^{\mu_s}$ of pairwise non-proportional linear forms.
Problem F of Fr\"oberg, Lundqvist, Oneto, and Shapiro \cite{FLOS18} asks whether
$r$ general such forms have Hilbert series $G_{n,d,r}$ whenever $\mu\ne(d)$.
The exclusion of the pure partition is necessary: ideals generated by powers
of linear forms can have larger Hilbert functions, as reflected in the
Fr\"oberg--Iarrobino conjecture \cite{Iar97}.

The answer to Problem F is negative. More specifically, a large linear-factor
multiplicity can force a defect that survives the introduction of additional
factors. Suppose that
\begin{equation}\label{eq:generators}
 I=(L_1^{d-a_1}H_1,\ldots,L_r^{d-a_r}H_r),\qquad \deg H_i=a_i.
\end{equation}
The elementary containment
$I\subseteq(L_1^{d-a_1},\ldots,L_r^{d-a_r})$, together with the inverse-system
description of powers of linear forms \cite{EI95}, gives lower bounds for
$\HF_{S/I}$. The issue is whether those bounds remain larger than the predicted
Hilbert function after the degrees of the generators have increased. The main
result gives a uniform answer, including when the $a_i$ increase with $d$.

Let $T=\CC[X_1,\ldots,X_n]$ be the dual polynomial ring. A linear form
$L_i=\sum_j\lambda_{ij}x_j$ determines the point
$P_i=[\lambda_{i1}:\cdots:\lambda_{in}]$ in the projective space with coordinates
$X_1,\ldots,X_n$. Put $N=n-1$.

\begin{theorem}[Persistence under additional factors]\label{thm:persistence}
Suppose that a nonzero form $B\in T_k$ vanishes to order at least $m_0\ge1$ at
each of $P_1,\ldots,P_r$, and set
\[
 \sigma=\frac{k}{m_0},\qquad R=r^{1/N}.
\]
Assume $2\le\sigma<R$. For every real number
\begin{equation}\label{eq:epsilon}
 0\le\varepsilon<\frac{R-\sigma}{\sigma(R-1)}
\end{equation}
there exist constants $d_0$, $c>0$, and $1<\lambda_-<\lambda_+<2$ with the
following property. For every integer $d\ge d_0$, every choice of integers
$0\le a_i\le\varepsilon d$, and every choice of nonzero $H_i\in S_{a_i}$, the
ideal \eqref{eq:generators} satisfies
\begin{equation}\label{eq:large-defect}
 [t^q]G_{n,d,r}=0,\qquad \HF_{S/I}(q)\ge c d^{n-1}
 \quad\text{for }\lambda_-d\le q\le\lambda_+d.
\end{equation}
In particular, the sum of these Hilbert-function values over the indicated
integer degrees is at least $c'd^n$ for some $c'>0$ and all sufficiently large $d$.
\end{theorem}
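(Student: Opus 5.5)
Let $J=(L_1^{d-a_1},\ldots,L_r^{d-a_r})$. Since $I\subseteq J$, we have $\HF_{S/I}(q)\ge\HF_{S/J}(q)$, so it suffices to bound $\HF_{S/J}(q)$ from below. The argument has three steps: pass to fat points via the inverse system, build many forms with the required multiplicities from powers of $B$, and check that the predicted series vanishes on the chosen interval.

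\textbf{Step 1: inverse systems.} By the Emsalem--Iarrobino description \cite{EI95},
\[
\HF_{S/J}(q)=\dim_\CC \bigl[\textstyle\bigcap_i \wp_i^{\,q-d+a_i+1}\bigr]_q
\quad\text{for } q\ge d-a_i,
\]
where $\wp_i\subset T$ is the ideal of $P_i$. In words, this is the dimension of the space of degree-$q$ forms in $T$ vanishing to order at least $q-(d-a_i)+1$ at each $P_i$. Since $a_i\ge0$, every required multiplicity is at most $m_q:=q-d+\lfloor\varepsilon d\rfloor+1$. It therefore suffices to bound from below the dimension of $[\bigcap_i\wp_i^{m_q}]_q$.

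\textbf{Step 2: forms built from $B$.} For $q$ in the interval $[\lambda_-d,\lambda_+d]$, take $s=\lceil m_q/m_0\rceil$. Then $B^s$ vanishes to order $sm_0\ge m_q$ at every $P_i$, and $B^s\cdot T_{q-sk}$ injects into the fat-point space. This gives
\[
\HF_{S/I}(q)\ge\binom{q-sk+N}{N},
\]
so it is enough to have $q-sk\ge\delta d$ for a fixed $\delta>0$. Because $sk\le\sigma m_q+k$, the requirement becomes
\[
q-\sigma\bigl(q-d+\varepsilon d\bigr)\ge\delta' d,
\]
that is,
\[
q\bigl(\sigma-1\bigr)\le\sigma d(1-\varepsilon)-\delta' d,
\qquad\text{i.e.}\qquad
q<\frac{\sigma(1-\varepsilon)}{\sigma-1}\,d .
\]
The resulting lower bound is $c\,d^N=c\,d^{n-1}$. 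Summing over the $\Theta(d)$ degrees in the interval gives the claim $c'd^n$.

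\textbf{Step 3: vanishing of the predicted series (main obstacle).} We need $[t^q]G_{n,d,r}=0$ for $q\ge\lambda_-d$. The standard asymptotics of the truncated series with $r$ fixed and $d\to\infty$ say that the first non-positive coefficient of $(1-t^d)^r/(1-t)^n$ occurs near $\tau d$. Here $\tau$ is the first zero in $(1,2)$ of the leading term
\[
\frac{x^N}{N!}-r\,\frac{(x-1)^N}{N!},
\]
which after rescaling by $d$ is
\[
x^N=r(x-1)^N,\qquad\text{so}\qquad \tau=\frac{R}{R-1}.
\]
On $1\le x\le2$ only the terms $j=0,1$ of the binomial expansion contribute, since $(x-j)_+=0$ for $j\ge2$. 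Thus the coefficient of $t^q$ equals
\[
\binom{q+N}{N}-r\binom{q-d+N}{N}<0
\quad\text{for } \frac{q}{d}\in(\tau+\eta,\,2)
\]
and $d$ large. Hence the truncation has already occurred, and all later coefficients are zero.

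\textbf{Choice of the interval.} We need a nonempty window
\[
\tau<\lambda_-<\lambda_+<\min\Bigl(2,\ \frac{\sigma(1-\varepsilon)}{\sigma-1}\Bigr).
\]
The condition
\[
\frac{R}{R-1}<\frac{\sigma(1-\varepsilon)}{\sigma-1}
\]
rearranges to $R(\sigma-1)<\sigma(1-\varepsilon)(R-1)$, that is,
\[
\varepsilon<\frac{R-\sigma}{\sigma(R-1)},
\]
which is exactly \eqref{eq:epsilon}. The hypothesis $\sigma\ge2$ gives $\sigma/(\sigma-1)\le2$, so the upper bound $\lambda_+<2$ is compatible. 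Also $R>\sigma\ge2$ gives $\tau<2$, so that the two-term formula applies. The delicate parts are keeping the error terms (ceilings, $+1$'s, lower-order terms of the binomials) uniform in $d$ and in the choice of $a_i$, and confirming that truncation occurs before the window rather than at a later sign change. The latter holds because the coefficient stays negative on the whole range $(\tau+\eta)d\le q<2d$, and the window lies inside this range.
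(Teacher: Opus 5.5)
Your proposal is correct and follows essentially the same route as the paper: you reduce to $J=(L_1^{d-a_1},\ldots,L_r^{d-a_r})$ and its fat-point inverse system, and you use $B^{s}T_{q-sk}$ with a linear margin $q-sk\ge\delta d$ forced by $\lambda_+<\sigma(1-\varepsilon)/(\sigma-1)$. You then show that the untruncated coefficient is negative on $[\lambda_-,\lambda_+]\subset(R/(R-1),2)$ via the two-term asymptotics. The only slip is in Step 1: the bound $q-d+a_i+1\le m_q$ comes from $a_i\le\lfloor\varepsilon d\rfloor$ (by integrality), not from $a_i\ge0$; using $\lfloor\varepsilon d\rfloor$ in place of the paper's $\max_i a_i$ is harmless and makes the uniformity in the $a_i$ explicit.
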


The constants depend only on the numerical data and $\varepsilon$; no
generality assumption on the $H_i$ is needed. In particular, they may be
products of linear forms with prescribed multiplicities. A fixed bound on
the $a_i$ is allowed, since it is eventually smaller than $\varepsilon d$ for
any fixed positive $\varepsilon$ satisfying \eqref{eq:epsilon}.

The required form $B$ is easy to construct in the following cases. The proof
in Section~\ref{sec:examples} uses only interpolation and multiplication of
forms; the familiar interpretation by curves on del Pezzo surfaces is not
needed for existence.

\begin{corollary}\label{cor:mainfamilies}
For each of
\[
 (n,r)=(3,5),(3,6),(3,7),(3,8),(4,9)
\]
and each fixed integer $a\ge1$, every non-pure partition of $d$ having a part at least
$d-a$ gives a negative answer to Problem F for all sufficiently large $d$.
The stronger conclusion of Theorem~\ref{thm:persistence}, allowing different
partitions on different generators, also holds in each case.
\end{corollary}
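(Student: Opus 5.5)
The plan is to apply Theorem~\ref{thm:persistence} directly. For each pair $(n,r)$ on the list I will exhibit a nonzero form $B\in T_k$ vanishing to order at least $m_0$ at $r$ points (in fact at arbitrary points $P_1,\dots,P_r$, so genericity of the $L_i$ plays no role in the existence of $B$), with $2\le\sigma=k/m_0<R=r^{1/N}$. Existence will come either from a naive count (the number of linear conditions imposed by multiplicity $m$ at a point is $\binom{m+N-1}{N}$, so a nonzero form exists when the total is less than $\dim T_k$), or from multiplying such forms.

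\textbf{Step 1: constructing $B$.} In the cases $(3,5)$ and $(4,9)$ a quadric through the points suffices. For $n=3$ we have $\dim T_2=6>5$, and for $n=4$ we have $\dim T_2=10>9$. This gives $k=2$, $m_0=1$, $\sigma=2$, while $R=\sqrt5$ and $R=9^{1/3}\approx2.08$ respectively.

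For $(3,6)$, take for each $j$ a conic $Q_j$ through the five points other than $P_j$, and set $B=\prod_jQ_j$. Then $k=12$ and $m_0=5$, so $\sigma=2.4<\sqrt6\approx2.449$.

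For $(3,7)$, take for each $j$ a cubic $C_j$ singular at $P_j$ and passing through the other six points. This imposes $3+6=9<10$ conditions, so $C_j\neq0$. Set $B=\prod_jC_j$. Then $k=21$ and $m_0=2+6=8$, giving $\sigma=21/8=2.625<\sqrt7\approx2.6458$.

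For $(3,8)$, take for each $j$ a sextic with a triple point at $P_j$ and double points at the other seven. This imposes $6+7\cdot3=27<28$ conditions. The product of these eight sextics has $k=48$ and $m_0=3+14=17$, so $\sigma=48/17\approx2.8235<\sqrt8\approx2.8284$.

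In every case $\sigma\ge2$ holds, since $12/5$, $21/8$ and $48/17$ all exceed $2$.

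\textbf{Step 2: applying the theorem.} Since $\sigma<R$, the interval \eqref{eq:epsilon} contains some $\varepsilon>0$; fix one. A non-pure partition $\mu\vdash d$ with $\mu_1\ge d-a$ gives $\mu$-power forms $L_i^{\mu_1}H_i$. Here $H_i=\prod_{s\ge2}L_{is}^{\mu_s}$ is a nonzero form of degree $a_i=d-\mu_1$, and $1\le a_i\le a$ because $\mu$ is non-pure. For $d\ge\max(d_0,a/\varepsilon)$ we have $a_i\le\varepsilon d$. Theorem~\ref{thm:persistence} then gives a degree $q$ with $[t^q]G_{n,d,r}=0$ but $\HF_{S/I}(q)\ge cd^{n-1}>0$.

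This bound holds for every choice of the linear forms, in particular for general ones. Hence the Hilbert series of $r$ general $\mu$-power forms differs from $G_{n,d,r}$, which is a negative answer to Problem F. The theorem allows different $a_i$ and arbitrary $H_i$, so the stronger statement with different partitions on different generators follows in the same way, provided each partition has a part at least $d-a$.

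\textbf{Main obstacle.} The only delicate point is finding $B$ with $\sigma$ strictly below $\sqrt r$ in the cases $r=7,8$, where the margin is tiny. Pure counts with equal multiplicities at all points give $\sigma=8/3$ for $r=7$ and $\sigma=3$ for $r=8$, both too large. The product trick above, which uses unequal multiplicities on each factor and then symmetrizes by taking the product, is what gets below the threshold. The arithmetic $48/17<\sqrt8$ must be checked carefully: squaring gives $2304/289\approx7.972<8$.
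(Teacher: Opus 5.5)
Your proof is correct and follows the paper's argument essentially verbatim. The paper feeds the same forms $B$ (a conic, the product of six conics, of seven cubics each singular at one point, of eight sextics with multiplicities $3,2,\ldots,2$, and a quadric in $\PP^3$) into Theorem~\ref{thm:persistence}, after extracting the large linear power and taking $d$ large enough that $a\le\varepsilon d$. One aside in your last paragraph is inaccurate, though harmless: a uniform count already gives the same ratios $21/8$ and $48/17$, since $7\binom{9}{2}=252<253=\binom{23}{2}$ and $8\binom{18}{2}=1224<1225=\binom{50}{2}$, so the product construction is a convenience rather than a necessity.
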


For $n=3$ these are failures of the actual Hilbert series of general forms,
by Anick's theorem. In four variables the comparison throughout the paper is
with $G_{4,d,r}$; no unproved case of Fr\"oberg's conjecture is used to identify
this series with the Hilbert series of general forms.

The first ternary example is quite small, although it lies beyond the range
suggested by low-degree experiments.

\begin{theorem}[The first five-generator failure]\label{thm:first}
Five general $\mu$-power forms in three variables have Hilbert series
$G_{3,d,5}$ for every non-pure partition of every $2\le d\le13$.
In degree $14$ the same assertion holds for every non-pure partition except
$(13,1)$. For five general forms of this exceptional type,
\[
 \HS_{S/(L_1^{13}M_1,\ldots,L_5^{13}M_5)}(t)
   =G_{3,14,5}(t)+t^{24}.
\]
\end{theorem}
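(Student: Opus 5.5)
The plan is to squeeze the Hilbert function of general forms between a lower bound, coming from general principles and the conic through five points, and an upper bound, coming from upper semicontinuity applied to one explicit instance.

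\textbf{Lower bound.} For any $r$ forms of degree $d$ in three variables, $\HF_{S/I}$ dominates $G_{3,d,r}$ coefficientwise. I take this from the case $n=3$ of the Fr\"oberg lower bound, which accompanies Anick's theorem \cite{An86}.

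\textbf{Upper bound.} The Hilbert function of $S/I$ is upper semicontinuous on the parameter space of tuples of $\mu$-power forms. So for general forms,
\[
 G_{3,d,5}\le \HS_{\text{general}}\le \HS_{\text{specific}}
\]
for any single specific tuple. Thus, for every non-pure $\mu\vdash d$ with $2\le d\le 14$ and $\mu\ne(13,1)$, it suffices to exhibit one explicit tuple $L_{i1}^{\mu_1}\cdots L_{is}^{\mu_s}$, $i=1,\dots,5$, with integer coefficients whose quotient has Hilbert series exactly $G_{3,d,5}$.

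\textbf{Reduction to finitely many rank computations.} $G_{3,d,5}$ vanishes from some degree $q_0\le 2d$ on. Hence it suffices to check that the multiplication map
\[
 \bigoplus_i S_{q-d}\to S_q
\]
has the expected rank in each degree $q\le q_0$. Once $\HF(q_0)=0$, the function vanishes in all higher degrees. Each check is a rank computation for an explicit integer matrix. I will do it modulo a large prime, since the rank over $\mathbb F_p$ is a lower bound for the rank over $\mathbb Q$, and a lower bound is exactly what is needed. This yields a finite, reproducible certificate covering all non-pure partitions of all $d\le14$.

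\textbf{The exceptional partition $(13,1)$.} First I compute $G_{3,14,5}$ near its end:
\[
 \dim S_{23}-5\dim S_9=300-275=25>0,\qquad \dim S_{24}-5\dim S_{10}=325-330<0.
\]
So $G_{3,14,5}$ has its last nonzero term in degree $23$.

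For the geometric lower bound, let $B\in T_2$ be the conic through $P_1,\dots,P_5$, as in Theorem~\ref{thm:persistence} with $m_0=1$, $k=2$. Since $I\subseteq(L_1^{13},\dots,L_5^{13})$, the inverse-system description \cite{EI95} identifies the degree-$24$ part of the annihilator of $(L_1^{13},\dots,L_5^{13})$ with the forms of degree $24$ vanishing to order $24-12=12$ at each $P_i$. The form $B^{12}$ is such a form, so $\HF_{S/I}(24)\ge1$. Combined with the Fr\"oberg bound, the general Hilbert series is coefficientwise at least $G_{3,14,5}+t^{24}$.

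For the matching upper bound, I exhibit one integer instance $L_i^{13}M_i$ and verify by modular rank computations three things:
\begin{itemize}
\item the expected rank in every degree $q\le 23$;
\item corank exactly $1$ in degree $24$, i.e.\ rank $324$;
\item surjectivity in degree $25$, so $\HF(25)=0$ and hence $\HF$ vanishes in all higher degrees.
\end{itemize}
Semicontinuity then gives the stated equality for general forms.

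\textbf{Main obstacle.} I expect the hard part to be making the certificate trustworthy and complete rather than any conceptual step. It has to enumerate every non-pure partition up to $d=14$, and it must pick instances general enough that the rank is attained. For $(13,1)$ the rank in degree $24$ must come out exactly $324$ and not less; a degenerate choice of $M_i$, for example one passing through the dual points badly, could drop it. My first approach is random integer coefficients checked modulo two independent primes, re-drawing the instance on any failure, together with recording the explicit matrices and pivot data so the computation can be replayed.
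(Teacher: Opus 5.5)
Your proposal is correct. For the exceptional partition $(13,1)$ it is the paper's argument. $C^{12}$ lies in the inverse system of $(L_1^{13},\ldots,L_5^{13})$ in degree $24$, so $\HF(24)\ge1$ for every choice of factors. One integer instance with modular ranks $275,324,351$ in degrees $23,24,25$ gives the matching upper bounds. You also see correctly that the modular computation only has to bound ranks from below; the characteristic-zero defect is certified by the conic, not by reduction modulo a prime.

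The route differs in the positive range.

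\begin{itemize}
\item \textbf{Your route.} You certify each of the several hundred non-pure partitions of $d\le14$ directly, checking the rank in every degree up to $Q_d$.
\item \textbf{The paper's route.} It checks only the two-part partitions $(d-a,a)$. Maximal rank then transfers to every refinement by Proposition~\ref{prop:coarsening}, because coalescing factors puts the $\nu$-power locus in the closure of the $\mu$-power locus.
\item \textbf{Degree $14$.} Every non-pure partition other than $(13,1)$ has a sub-sum of parts between $2$ and $12$. It therefore coarsens to some $(14-a,a)$ with $2\le a\le7$.
\item \textbf{Boundary degrees.} Lemma~\ref{lem:boundarychecks} reduces each case to injectivity of $\Phi_{Q_d-1}$ and surjectivity of $\Phi_{Q_d}$, because lower-degree syzygies can be multiplied up.
\end{itemize}

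Together these cut the work to $96$ rank checks. They also explain why $(13,1)$ is isolated: it is the only non-pure partition of $14$ with no coarsening to a two-part partition already certified. Your brute-force route is valid and avoids the coarsening argument, but it is much heavier and gives no such structural explanation.

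One attribution should be corrected. Fr\"oberg's general lower bound is only lexicographic, so it does not directly give the coefficientwise inequality you invoke. Here that inequality is elementary. Since the ratio $\dim S_q/\dim S_{q-d}$ decreases for $q\ge d$ and $Q_d<2d$, every coefficient of $G_{3,d,5}$ equals $\max\{\dim S_q-5\dim S_{q-d},0\}$, with $\dim S_j=0$ for $j<0$. Any five forms of degree $d$ satisfy $\HF_{S/I}(q)\ge\max\{\dim S_q-5\dim S_{q-d},0\}$, simply because $\rank\Phi_q$ is at most the number of columns. So neither Anick's theorem nor Fr\"oberg's bound is needed for this step.
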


The nonzero class in degree $24$ is provided by the twelfth power of the
conic through the five dual points. Its existence requires no computation.
The exact Hilbert series and the assertion that no earlier non-pure example
occurs use the finite certificate in Appendix~\ref{app:certificate}.

The surrounding positive results help delimit the problem. Coalescing the
linear factors shows that every positive result for pure powers also applies
to all partitions of the same degree. Stanley's strong Lefschetz theorem
therefore settles $r\le n+1$ for every partition. More generally, Nenashev's
maximal-rank results apply to any nonempty class of forms invariant under
linear changes of coordinates \cite{Nen17}, hence directly to $\mu$-power
forms. The equivariant vector-bundle methods of Blomenhofer and Casarotti
\cite{BC25} give further results of this kind. Recent work of Boij, Dannetun,
and Lundqvist \cite{BDL26} proves additional cases for general forms in low
degrees. The present results concern a complementary range: a fixed number
of generators and increasing degree, with a large repeated linear factor.

Section~\ref{sec:comparison} proves the comparison statements and
Theorem~\ref{thm:persistence}. Section~\ref{sec:examples} constructs the
families and gives explicit thresholds. Section~\ref{sec:boundary} proves
Theorem~\ref{thm:first} and explains what the method leaves open.

\section{Comparison with power ideals and persistence}\label{sec:comparison}

We first record the two different ways in which a product of powers can be
compared with a pure power. Specialization gives upper bounds on the generic
Hilbert function, whereas divisibility gives lower bounds for every member
of a family.

\subsection{Specialization and the positive range}

A partition $\nu$ is a coarsening of $\mu$ if its parts are obtained by
grouping and summing the parts of $\mu$.

\begin{proposition}\label{prop:coarsening}
Let $\mu\vdash d$ and let $\nu$ be a coarsening of $\mu$.
The generic Hilbert function for $r$ $\mu$-power generators is bounded above,
degree by degree, by that for $r$ $\nu$-power generators.
If the latter Hilbert series equals $G_{n,d,r}$, then so does the former.
The same assertion holds when the partition is allowed to vary from one
generator to another.
\end{proposition}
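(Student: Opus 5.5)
The argument is a specialization argument combined with lower semicontinuity of the Hilbert function of a quotient in a flat family of ideals. In each degree $q$, $\HF_{S/I}(q)=\dim S_q-\rank$ of the multiplication map
\[
\Phi_q\colon\bigoplus_{i=1}^r S_{q-d}\longrightarrow S_q,\qquad (g_1,\ldots,g_r)\mapsto \sum_i g_i F_i,
\]
where $F_1,\ldots,F_r$ are the generators. The entries of $\Phi_q$ are polynomial in the coefficients of the $F_i$. The rank of $\Phi_q$ is therefore lower semicontinuous on the parameter space of generators: its generic value is at least its value at any special point. Consequently the generic Hilbert function is bounded above, degree by degree, by the Hilbert function at any specialization.

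\textbf{Step 1 (the parameter space).} For $\mu=(\mu_1,\ldots,\mu_s)$ and one generator, the space of $\mu$-power forms is the image of the irreducible variety $(S_1)^s$ under the map $(L_1,\ldots,L_s)\mapsto L_1^{\mu_1}\cdots L_s^{\mu_s}$. I allow the $L_j$ to range over all of $S_1$, dropping the pairwise non-proportionality condition. This condition is open and nonempty, so it does not change the generic value of the rank. For $r$ generators, possibly with different partitions $\mu^{(i)}$, the parameter space is the product $\prod_i (S_1)^{s_i}$, which is again irreducible. On it the rank of $\Phi_q$ attains its maximum on a dense open set, and that maximum is the generic rank.

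\textbf{Step 2 (specializing to $\nu$).} Let $\nu$ be a coarsening of $\mu$. Each part $\nu_k$ is a sum $\sum_{j\in B_k}\mu_j$ over a block $B_k$ of the grouping. Set $L_j=M_k$ for every $j\in B_k$. The generator then becomes $\prod_k M_k^{\nu_k}$, which is a $\nu$-power form. Letting the $M_k$ vary, this restriction of the parameter space is exactly the parameter space for $\nu$-power forms. On that irreducible subvariety the generic rank is the generic $\nu$-rank, and the maximum rank on the whole space is at least this value. Hence
\[
\HF^{\mathrm{gen}}_{\mu}(q)\le \HF^{\mathrm{gen}}_{\nu}(q)\quad\text{for every } q.
\]
The same argument applies generator by generator when the partitions vary.

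\textbf{Step 3 (the equality clause).} This step needs the known general lower bound: for any $r$ forms of degree $d$, the Hilbert function is at least the coefficients of $G_{n,d,r}$, coefficientwise. This is Fröberg's inequality, valid in the lexicographic sense and in particular coefficientwise on the range where the truncated series is positive.

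I expect this to be the main obstacle. Fröberg's inequality is a lexicographic statement rather than a coefficientwise one. The clean route is to avoid the general inequality: if the $\nu$-series equals $G_{n,d,r}$, then $\Phi_q$ has the maximal rank $\min\bigl(r\dim S_{q-d},\dim S_q\bigr)$ in each degree, and the upper bound from Step 2 forces $\Phi_q$ to have maximal rank for $\mu$ as well. Here one must check that "Hilbert series equals $G$" is equivalent to maximal rank degree by degree in the relevant sense. In the regime of the paper this is the standard equivalence, and I would verify it via the usual recursion $\HF_{S/(I,F)}(q)=\max\bigl(0,\HF_{S/I}(q)-\HF_{S/I}(q-d)\bigr)$ under maximal rank. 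Once that is settled, the $\mu$-series is squeezed between $G_{n,d,r}$ from below and the $\nu$-series, which equals $G_{n,d,r}$, from above, so equality holds.
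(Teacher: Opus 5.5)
Your Steps 1 and 2 are the paper's specialization argument and are correct. The gap is in Step 3.

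A lexicographic inequality says nothing about coefficients after the first degree in which the two series differ. So Fr\"oberg's bound is not ``in particular coefficientwise on the range where the truncated series is positive,'' and it cannot serve as a coefficientwise floor.

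Your proposed replacement does not work either. A Hilbert series equal to $G_{n,d,r}$ does not mean that $\Phi_q$ has rank $\min\{r\dim S_{q-d},\dim S_q\}$ in every degree. For $r\ge2$ and $q\ge 2d$, the Koszul syzygy $(F_2,-F_1,0,\ldots,0)$ times any form of degree $q-2d$ lies in $\ker\Phi_q$. Maximal rank in such a degree would therefore force $\HF_{S/I}(q)=0$.

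For $2\le r<n$ the series $G_{n,d,r}$ never vanishes. A regular sequence has series exactly $G_{n,d,r}$, yet its $\Phi_{2d}$ is neither injective nor surjective. This range is covered by Corollary~\ref{cor:small-r}, which the paper derives from this proposition. The equivalence you want holds when $G_{n,d,r}$ vanishes in all degrees $q\ge 2d$, as for five ternary forms, but not in general.

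Finally, your closing ``squeeze'' still uses $G_{n,d,r}$ as a coefficientwise lower bound. That is exactly what you had set out to avoid.

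The missing observation is that no coefficientwise lower bound is needed. Step 2 shows that the generic $\mu$-series is coefficientwise at most the $\nu$-series, which is $G_{n,d,r}$ by hypothesis. Fr\"oberg's theorem shows that it is lexicographically at least $G_{n,d,r}$. If the two series differed, then in the first degree of difference the coefficientwise bound would make the $\mu$-coefficient strictly smaller, contradicting the lexicographic inequality. Hence they are equal; this is precisely how the paper concludes.
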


\begin{proof}
Coalesce the factors in each group to obtain a $\nu$-power form. Thus the
$\nu$-power locus is contained in the closure of the $\mu$-power locus.
In degree $q\ge d$, the quotient dimension is the corank of
\begin{equation}\label{eq:multiplication}
 \Phi_q:S_{q-d}^{\,r}\longrightarrow S_q,
 \qquad (A_1,\ldots,A_r)\longmapsto\sum_{i=1}^r A_iF_i.
\end{equation}
Matrix rank is lower semicontinuous, which proves the first assertion.
Here we use the usual generic Hilbert series, constant on a nonempty open
subset of the fixed-degree parameter space.
Fr\"oberg's general lower bound is lexicographic \cite{Fro85}. If the
specialized series is $G_{n,d,r}$, the generic series is coefficientwise at
most $G_{n,d,r}$ and lexicographically at least $G_{n,d,r}$, so equality
follows. The specialization can be made independently for each generator.
\end{proof}

\begin{corollary}\label{cor:small-r}
If $r\le n+1$, then general products of powers of linear forms, each of total
degree $d$, have Hilbert series $G_{n,d,r}$ for any prescribed partitions.
For binary forms the conclusion holds for every $r$.
\end{corollary}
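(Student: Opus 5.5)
The plan is to reduce to pure powers via Proposition~\ref{prop:coarsening} and then quote the known positive results. Given a product $L_1^{\mu_1}\cdots L_s^{\mu_s}$ of total degree $d$, coalesce all factors into a single group. The pure partition $(d)$ is a coarsening of every partition of $d$, and Proposition~\ref{prop:coarsening} allows the partition to vary from generator to generator. So it suffices to show that $r$ general $d$-th powers of linear forms have Hilbert series $G_{n,d,r}$ in each of the two stated ranges. The lexicographic lower bound is already built into the proposition, so no separate argument is needed for the reverse inequality.

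For $r\le n+1$, take the general powers $L_1^d,\ldots,L_r^d$. After a linear change of coordinates, which does not affect Hilbert functions, we may assume $L_i=x_i$ for $i\le n$. If $r\le n$, the ideal $(x_1^d,\ldots,x_r^d)$ is a complete intersection, whose Hilbert series is $(1-t^d)^r/(1-t)^n$. All coefficients of this series are positive, so the truncation is harmless and the series equals $G_{n,d,r}$.

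For $r=n+1$ the remaining generator may be taken to be $\ell^d$ with $\ell=x_1+\cdots+x_n$. Stanley's strong Lefschetz theorem for the monomial complete intersection $A=S/(x_1^d,\ldots,x_n^d)$ says that multiplication by $\ell^d$ has maximal rank in every degree. This maximal rank statement is exactly what is needed to conclude $\HS_{A/\ell^dA}=[(1-t^d)(1-t^d)^n/(1-t)^n]_+=G_{n,d,n+1}$. Indeed, the Hilbert function of the cokernel in degree $q$ is $\max(h_q-h_{q-d},0)$, where $h$ is the Hilbert function of $A$. Since $h$ is symmetric and unimodal, $h_q-h_{q-d}$ is nonnegative up to some degree and nonpositive afterwards. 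Hence the truncation at the first non-positive coefficient matches the $\max$ with $0$.

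The binary case $n=2$ is elementary. In $\CC[x,y]$, the ideal generated by $r$ general forms of degree $d$ fails to fill $S_q$ only when the multiplication map \eqref{eq:multiplication} fails to have maximal rank. For pure powers, Macaulay duality \cite{EI95} identifies $\dim_\CC(L_1^d,\ldots,L_r^d)_q$ with $\min(r,q+1)$ for distinct points when $q\ge d$, since powers $L_i^q$ of distinct binary linear forms are linearly independent up to $q+1$ of them. Rewriting, the ideal has dimension $\min(r(q-d+1),q+1)$ in degree $q$, which matches the coefficients of $G_{2,d,r}$. Then Proposition~\ref{prop:coarsening} transfers the conclusion to all partitions.

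The main point needing care is the $r=n+1$ step. One has to check that the maximal-rank output of strong Lefschetz translates exactly into the truncated series; this rests on the symmetry and unimodality of the Hilbert function of the monomial complete intersection. Everything else is a direct citation.
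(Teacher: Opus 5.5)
Your overall route is the paper's: coarsen every generator to a pure power via Proposition~\ref{prop:coarsening}, use the complete intersection for $r\le n$, use Stanley's strong Lefschetz theorem for $r=n+1$, and use apolarity on $\PP^1$ for $n=2$. The binary step, however, contains a false claim. You assert $\dim_\CC(L_1^d,\ldots,L_r^d)_q=\min(r,q+1)$ for $q\ge d$. This already fails for $r=1$, where $\dim L_1^dS_{q-d}=q-d+1$. Linear independence of the $q$-th powers $L_i^q$ only handles $q=d$. The passage to $\min\{r(q-d+1),q+1\}$ is not a rewriting; it is exactly the statement that needs proof.

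The paper supplies this step with the inverse-system identity from the proof of Proposition~\ref{prop:comparison}. By that identity, $(S/(L_1^d,\ldots,L_r^d))_q^\vee$ consists of the degree-$q$ binary forms in $T$ vanishing to order $q-d+1$ at the $r$ distinct points $P_i\in\PP^1$. On $\PP^1$ this means divisibility by $\prod_i\ell_i^{q-d+1}$, where $\ell_i$ is the linear form vanishing at $P_i$. Hence the dimension is $\max\{q+1-r(q-d+1),0\}$. To match $G_{2,d,r}$ when $r\ge2$, note that $q+1-r(q-d+1)$ strictly decreases on $d\le q\le 2d-1$ and equals $(2-r)d\le0$ at $q=2d-1$. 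So the first non-positive coefficient occurs below $2d$, where only two terms of the series survive.

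Two smaller points. First, for $r=n$ the series $(1+t+\cdots+t^{d-1})^n$ is a polynomial, so its coefficients are not all positive. The truncation is nevertheless harmless, because every coefficient after the first zero vanishes.

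Second, in the $r=n+1$ step, knowing that $h_q-h_{q-d}$ is nonnegative up to some degree and nonpositive afterwards is not enough to identify $\max\{h_q-h_{q-d},0\}$ with the truncation. An interior zero followed by a positive value would be cut off by $[\cdot]_+$. You need $h_q-h_{q-d}>0$ strictly before the first non-positive value. This holds because the coefficients of $(1+t+\cdots+t^{d-1})^n$ strictly increase up to the middle degree $c/2$, where $c=n(d-1)$. So the only possible zero in the nonnegative range is at $q=(c+d)/2$, and every later value is $\le0$.
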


\begin{proof}
Specialize each generator to a pure power. For $r\le n$ one obtains an
initial segment of $x_1^d,\ldots,x_n^d$, a regular sequence. For $r=n+1$ use
$x_1^d,\ldots,x_n^d,(x_1+\cdots+x_n)^d$ and Stanley's strong Lefschetz theorem
for monomial complete intersections \cite{Sta78}.
In two variables the inverse-system description reduces the pure-power
case to prescribed vanishing at distinct points of $\PP^1$; in degree
$q\ge d$ its dimension is $\max\{q+1-r(q-d+1),0\}$. This gives $G_{2,d,r}$.
Apply Proposition~\ref{prop:coarsening}.
\end{proof}

A failure for a coarsening does not, by itself, imply a failure for a
refinement. The next comparison supplies the lower bound needed for that
direction.

\subsection{Divisibility and inverse systems}

The ring $S$ acts on $T$ by constant-coefficient differentiation. For a
homogeneous ideal $J$, write $(S/J)_q^\vee$ for the orthogonal complement
of $J_q$ under the perfect pairing $S_q\times T_q\longrightarrow\CC$.
Let $\mathfrak p_i\subset T$ be the ideal of $P_i$. For
$Z=\{P_1,\ldots,P_r\}$, put
\[
 I_Z^{(m)}=\bigcap_i\mathfrak p_i^m,\qquad
 h_Z(q;m)=\dim[I_Z^{(m)}]_q\quad(m\ge1).
\]
We use the convention $h_Z(q;m)=\dim T_q$ when $m\le0$.

\begin{proposition}\label{prop:comparison}
Let $I=(L_1^{b_1}H_1,\ldots,L_r^{b_r}H_r)$, where $b_i\ge1$ and the $H_i$
are arbitrary homogeneous forms. In every degree $q$ there is an inclusion
\begin{equation}\label{eq:inclusion}
 \left[\bigcap_{i=1}^r\mathfrak p_i^{\,\max\{q-b_i+1,0\}}\right]_q
 \ \subseteq\ (S/I)_q^\vee.
\end{equation}
In particular, if all generators have degree $d$, $b_i=d-a_i$, and
$a=\max_i a_i$, then
\begin{equation}\label{eq:fat-bound}
 \HF_{S/I}(q)\ge h_Z(q;q-d+a+1).
\end{equation}
\end{proposition}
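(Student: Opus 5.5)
The plan is to prove the inclusion \eqref{eq:inclusion} directly from the apolarity pairing, using the classical inverse-system description of a principal ideal generated by a power of a linear form, and then to deduce \eqref{eq:fat-bound} by monotonicity in the exponents.

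\emph{Reduction.} Since $I\subseteq J:=(L_1^{b_1},\ldots,L_r^{b_r})$, we have $I_q\subseteq J_q$, so $J_q^\perp\subseteq I_q^\perp=(S/I)_q^\vee$. It therefore suffices to show
\[
 \Bigl[\bigcap_i\mathfrak p_i^{\max\{q-b_i+1,0\}}\Bigr]_q\subseteq J_q^\perp .
\]
Because $J_q=\sum_i (L_i^{b_i})_q$, we have $J_q^\perp=\bigcap_i (L_i^{b_i})_q^\perp$. The claim thus reduces to a single generator: for each $i$,
\[
 [\mathfrak p_i^{\max\{q-b+1,0\}}]_q\subseteq (L_i^{b})_q^\perp, \qquad b=b_i .
\]

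\emph{Single generator.} If $q<b$, then $(L^b)_q=0$ and its orthogonal complement is all of $T_q$, so there is nothing to prove. This is consistent with the convention that exponent $0$ gives the unit ideal. Now suppose $q\ge b$. Then $(L^b)_q=L^b S_{q-b}$. Take $F\in T_q$ and $G\in S_{q-b}$. Under the differentiation pairing, $\langle L^bG,F\rangle=\langle G,\,L^b\circ F\rangle$, where $L^b\circ F$ is the $b$-th directional derivative of $F$ along the vector $P=(\lambda_1,\ldots,\lambda_n)$. Hence $F\perp (L^b)_q$ if and only if $L^b\circ F=0$.

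Apply a linear change of coordinates sending $P$ to $(1,0,\ldots,0)$. Then $L$ acts as $\partial/\partial X_1$, and the point $P$ has ideal $\mathfrak p=(X_2,\ldots,X_n)$. A form $F\in T_q$ lies in $\mathfrak p^{m}$ exactly when every monomial in $F$ has $X_1$-degree at most $q-m$. Taking $m=q-b+1$ gives $X_1$-degree at most $b-1$, and then $\partial_{X_1}^b F=0$. So the inclusion holds. In fact it is an equality, which is the standard fact from \cite{EI95}, but only the inclusion is needed here. The one point to check is that the pairing is compatible with this coordinate change: differentiation pairings transform contragrediently, and $L$ corresponds to the point $P$ under that correspondence, so no difficulty arises.

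\emph{Deducing \eqref{eq:fat-bound}.} Set $b_i=d-a_i$, so that $q-b_i+1=q-d+a_i+1\le q-d+a+1$. The ideals $\mathfrak p_i^m$ decrease as $m$ increases, so
\[
 \bigcap_i\mathfrak p_i^{\,q-d+a+1}\subseteq\bigcap_i\mathfrak p_i^{\max\{q-d+a_i+1,0\}}
\]
whenever $q-d+a+1\ge1$. Taking dimensions in degree $q$ and using $\dim(S/I)_q^\vee=\HF_{S/I}(q)$ (the pairing is perfect) gives the bound. If instead $q-d+a+1\le0$, then $q<d$, so $I_q=0$ and $\HF_{S/I}(q)=\dim S_q=\dim T_q$, which matches the convention $h_Z(q;m)=\dim T_q$ for $m\le0$.

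There is no real obstacle here. The only step needing care is the single-point computation, specifically matching the directional derivative $L^b\circ F$ with vanishing order at $P_i$ and handling the degenerate exponent $0$ correctly.
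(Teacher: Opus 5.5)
Your proof is correct and follows essentially the same route as the paper: pass to $J=(L_1^{b_1},\ldots,L_r^{b_r})$ via $I\subseteq J$, do the single-generator inverse-system computation after moving the dual point to $[1:0:\cdots:0]$, intersect over $i$, and use that $\mathfrak p_i^m$ decreases in $m$ to get the uniform bound. You also spell out details the paper leaves implicit, namely the adjointness $\langle L^bG,F\rangle=\langle G,L^b\circ F\rangle$ and the degenerate cases $q<b_i$ and $q-d+a+1\le 0$.
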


\begin{proof}
Set $J=(L_1^{b_1},\ldots,L_r^{b_r})$. Since $I\subseteq J$, we have
$(S/J)_q^\vee\subseteq(S/I)_q^\vee$.
The inverse-system identity of Emsalem and Iarrobino \cite{EI95} identifies
$(S/J)_q^\vee$ with the left side of \eqref{eq:inclusion}. For completeness,
take one factor $L=x_1$ and its dual point $P=[1:0:\cdots:0]$.
A degree-$q$ form is annihilated by $\partial_{X_1}^b$ precisely when every
one of its monomials has total degree at least $q-b+1$ in $X_2,\ldots,X_n$.
This is vanishing to that order at $P$. Intersecting these conditions proves
the identity. The uniform multiplicity in \eqref{eq:fat-bound} is at least
each of the multiplicities in \eqref{eq:inclusion}.
\end{proof}

This argument uses no condition on the factorization of $H_i$. It is also
valid when their degrees differ. The apolarity identity is classical; the
application here is the comparison between its vanishing conditions and
the cutoff for generators of the larger degree $d$.

For later use set
\begin{equation}\label{eq:coefficient}
 c_{n,d,r}(q)=\sum_{j=0}^r(-1)^j\binom rj
                    \binom{q-jd+n-1}{n-1},
\end{equation}
where $\binom{u}{v}=0$ for integers $u<v$ and $v\ge0$.

\begin{corollary}[A finite-degree criterion]\label{cor:criterion}
Let $q\ge d$ and $0\le a<d$. If
\[
 h_Z(q;q-d+a+1)>0\quad\text{and}\quad c_{n,d,r}(q)\le0,
\]
then every ideal \eqref{eq:generators} with $a_i\le a$ has
$\HF_{S/I}(q)>[t^q]G_{n,d,r}=0$.
When $q<2d$, the second condition is simply
\begin{equation}\label{eq:two-term}
 \binom{q+n-1}{n-1}\le r\binom{q-d+n-1}{n-1}.
\end{equation}
Equivalently, writing $m=q-d+a+1$, it says that the affine virtual dimension
of the system of degree-$q$ forms with multiplicity $m-a$ at the $r$ points
is non-positive.
\end{corollary}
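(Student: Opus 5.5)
The plan is to combine the lower bound \eqref{eq:fat-bound} of Proposition~\ref{prop:comparison} with a direct reading of the truncation in $G_{n,d,r}$. No new geometry is needed: the corollary is bookkeeping on top of the comparison proposition.

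\emph{Lower bound for $\HF_{S/I}(q)$.} Since $0\le a_i\le a<d$, each exponent $b_i=d-a_i$ is at least $1$, so Proposition~\ref{prop:comparison} applies. Set $a'=\max_i a_i\le a$. Then \eqref{eq:fat-bound} gives $\HF_{S/I}(q)\ge h_Z(q;q-d+a'+1)$. The function $m\mapsto h_Z(q;m)$ is nonincreasing, because $I_Z^{(m+1)}\subseteq I_Z^{(m)}$. The convention $h_Z(q;m)=\dim T_q$ for $m\le 0$ keeps this monotonicity across $m=0$. Hence
\[
 \HF_{S/I}(q)\ge h_Z(q;q-d+a'+1)\ge h_Z(q;q-d+a+1)>0 .
\]

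\emph{The predicted coefficient vanishes.} Expanding $(1-t^d)^r(1-t)^{-n}$ shows that its coefficient of $t^q$ is exactly $c_{n,d,r}(q)$. The convention $\binom uv=0$ for $u<v$ is what kills the terms with $jd>q$. The bracket $[\cdot]_+$ truncates at the first non-positive coefficient, which means every coefficient from that degree on is set to $0$. If $c_{n,d,r}(q)\le0$, the first non-positive coefficient occurs in some degree $\le q$. Therefore $[t^q]G_{n,d,r}=0$, and with the previous step this gives $\HF_{S/I}(q)>0=[t^q]G_{n,d,r}$.

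\emph{The case $q<2d$.} For $j\ge2$ we have $q-jd+n-1<n-1$, so those binomials vanish. Thus
\[
 c_{n,d,r}(q)=\binom{q+n-1}{n-1}-r\binom{q-d+n-1}{n-1},
\]
and $c_{n,d,r}(q)\le 0$ is exactly \eqref{eq:two-term}. For the reformulation, put $m=q-d+a+1$, so that $m-a=q-d+1\ge1$. Vanishing to order $m-a$ at a point imposes $\binom{(m-a-1)+n-1}{n-1}=\binom{q-d+n-1}{n-1}$ linear conditions, and $\dim T_q=\binom{q+n-1}{n-1}$. So the difference above is the affine virtual dimension of degree-$q$ forms with multiplicity $m-a$ at the $r$ points.

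The only step needing care is the monotonicity of $h_Z$ in $m$, which is what lets a single uniform bound $a$ replace the individual $a_i$. Once that and the meaning of the truncation are made explicit, the rest is routine.
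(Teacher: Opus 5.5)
Your proof is correct and follows the paper's argument. You apply \eqref{eq:fat-bound}, observe that a non-positive coefficient of the untruncated series forces the truncated coefficient to vanish, and note that only the terms $j=0,1$ survive when $q<2d$; your monotonicity of $h_Z(q;m)$ in $m$, which lets the uniform bound $a$ replace $\max_i a_i$, makes explicit a step the paper leaves implicit.
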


\begin{proof}
Use \eqref{eq:fat-bound}. A non-positive coefficient of the untruncated series
forces the corresponding coefficient of its positive truncation to be zero.
For $q<2d$ only the terms $j=0,1$ in \eqref{eq:coefficient} remain.
\end{proof}

\subsection{The quantitative argument}

\begin{proof}[Proof of Theorem~\ref{thm:persistence}]
The inequality on $\varepsilon$ is equivalent to
\[
 \frac{R}{R-1}<\frac{\sigma(1-\varepsilon)}{\sigma-1}.
\]
Since $R>\sigma\ge2$, we may choose
\begin{equation}\label{eq:lambda-window}
 \frac{R}{R-1}<\lambda_-<\lambda_+
       <\frac{\sigma(1-\varepsilon)}{\sigma-1}\le2.
\end{equation}
Put $\delta=\sigma(1-\varepsilon)-(\sigma-1)\lambda_+>0$.
For an integer $q$ in the specified interval and $a=\max_i a_i$, let
\[
 m=q-d+a+1,\qquad t=\left\lceil\frac{m}{m_0}\right\rceil,
 \qquad u=q-kt.
\]
For all sufficiently large $d$ we have $m\ge1$, and
\[
 u\ge q-\sigma m-k
   =\sigma(d-a-1)-(\sigma-1)q-k
   \ge\delta d-\sigma-k\ge\frac{\delta d}{2}.
\]
Multiplication by $B^t$ is injective on $T_u$ and its image consists of
degree-$q$ forms vanishing to order at least $m$ at every $P_i$. Thus
Proposition~\ref{prop:comparison} gives
\begin{equation}\label{eq:multiples-bound}
 \HF_{S/I}(q)\ge\dim(B^tT_u)=\binom{u+N}{N}
                \ge\frac{\delta^N}{2^N N!}d^N.
\end{equation}

On the other hand, $d<q<2d$. Uniformly for $s=q/d$ in the compact interval
$[\lambda_-,\lambda_+]$,
\[
 c_{n,d,r}(q)=\frac{d^N}{N!}\bigl(s^N-r(s-1)^N\bigr)+O(d^{N-1}).
\]
The expression in parentheses is strictly negative there, since
$s>R/(R-1)$. Hence $c_{n,d,r}(q)<0$ for all sufficiently large $d$,
uniformly in $q$ and in the $a_i$. This proves \eqref{eq:large-defect}.
The interval contains at least $(\lambda_+-\lambda_-)d/2$ integers for large
$d$, which gives the last assertion.
\end{proof}

Multiplying $B^t$ by all forms of degree $u$ gives the quantitative
conclusion: a positive linear margin for $u$ provides both a growing
defect and the absence of a congruence restriction on $d$.

The geometric hypothesis can be expressed by the Waldschmidt constant
\[
 \widehat\alpha(Z)=\inf_{m\ge1}\frac{\alpha(I_Z^{(m)})}{m}
                 =\lim_{m\to\infty}\frac{\alpha(I_Z^{(m)})}{m},
\]
where $\alpha(J)$ is the least degree of a nonzero form of $J$.
The equality follows from subadditivity. This invariant measures the
asymptotic degree needed for uniform vanishing; see \cite{CHMR13} for its
relation with Nagata's conjecture.

\begin{corollary}\label{cor:waldschmidt}
If $r>2^{n-1}$ and $\widehat\alpha(Z)<r^{1/(n-1)}$, then the conclusion of
Theorem~\ref{thm:persistence} holds for some $\varepsilon>0$.
\end{corollary}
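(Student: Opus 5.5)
The plan is to produce, from the hypothesis $\widehat\alpha(Z)<R=r^{1/(n-1)}$, a form $B$ meeting the hypotheses of Theorem~\ref{thm:persistence}. Then it only remains to check that the admissible range \eqref{eq:epsilon} for $\varepsilon$ contains a positive number. Write $\widehat\alpha=\widehat\alpha(Z)$. Since $\widehat\alpha$ is the limit of $\alpha(I_Z^{(m)})/m$, we can choose $m_0\ge1$ with
\[
 \frac{\alpha(I_Z^{(m_0)})}{m_0}<R .
\]
We then take $B$ to be a nonzero form of degree $k=\alpha(I_Z^{(m_0)})$ in $I_Z^{(m_0)}$. By construction $B$ vanishes to order at least $m_0$ at each $P_i$, and $\sigma=k/m_0<R$.

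The remaining hypothesis of Theorem~\ref{thm:persistence} is the lower bound $\sigma\ge2$, and this is the one point needing care. I would handle it in one of two ways.

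\emph{First option: a general lower bound.} One could argue $\alpha(I_Z^{(m)})\ge 2m$ whenever $r\ge 2$. The idea is to restrict a form vanishing to order $m$ at $P_1,P_2$ to the line through these points: a nonzero restriction of degree $k$ has two zeros of order $m$, forcing $k\ge 2m$. Otherwise the line lies in the zero set and one divides out, but iterating this needs a Bezout-type estimate and does not obviously close up.

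\emph{Second option: avoid the issue entirely.} This is the route I would actually take. If $\sigma<2$, replace $B$ by $B\cdot\ell^{j}$ for a suitable linear form $\ell$ and power $j$. This raises $k$ without lowering the vanishing order, so we can set $\sigma'=\max(\sigma,2)$, approximated from above by rationals $k'/m_0$ (after also scaling $m_0$ by powering $B$). Because $r>2^{n-1}$ gives $R>2$, the new ratio $\sigma'$ can be taken with $2\le\sigma'<R$. So in all cases we obtain a form with $2\le\sigma<R$.

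Finally, with $2\le\sigma<R$ the right-hand side of \eqref{eq:epsilon},
\[
 \frac{R-\sigma}{\sigma(R-1)},
\]
is strictly positive. We may therefore choose $\varepsilon>0$ in that range, and Theorem~\ref{thm:persistence} gives the full conclusion. The only genuinely delicate points are the passage from the infimum to an actual form, and the normalization $\sigma\ge2$; the padding trick in the second option makes the latter routine.
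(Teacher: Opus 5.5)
Your second option is exactly the paper's argument: use the infimum to pick $B$ with $k/m_0<R$, and if $k<2m_0$ multiply by $\ell^{2m_0-k}$ to get ratio exactly $2<R$ (so no rational approximation or powering is needed), then apply Theorem~\ref{thm:persistence}. Discard the first option entirely, since it is false rather than merely hard to close: if the points lie on a hyperplane $\ell=0$, then $\ell^m$ shows $\widehat\alpha(Z)=1$, and this is exactly why the padding step is needed.
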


\begin{proof}
Choose $B$ with degree--multiplicity ratio strictly smaller than
$r^{1/(n-1)}$. If this ratio is smaller than $2$, multiply $B$ by a form
of degree $2m_0-\deg B$. The resulting ratio is $2$, still smaller than
$r^{1/(n-1)}$. Apply Theorem~\ref{thm:persistence}.
\end{proof}

The Waldschmidt constant also gives the asymptotic boundary of the
uniform fat-point criterion. This boundary concerns the criterion itself,
not every possible source of Hilbert-function defect.

\begin{proposition}\label{prop:optimal-criterion}
Suppose $2\le\omega=\widehat\alpha(Z)<R=r^{1/(n-1)}$, and put
\[
 \eta=\frac{R-\omega}{\omega(R-1)}.
\]
If $d_j\to\infty$ and triples $(d_j,a_j,q_j)$ satisfy the hypotheses of
Corollary~\ref{cor:criterion}, then
\[
 \limsup_{j\to\infty}a_j/d_j\le\eta.
\]
Conversely, for every $0\le\varepsilon<\eta$ and all sufficiently large $d$,
the criterion applies with $a=\lfloor\varepsilon d\rfloor$ and a suitable $q$.
\end{proposition}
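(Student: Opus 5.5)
The proof has two parts, an upper bound on $a_j/d_j$ and a matching construction. Both rest on writing the two conditions of Corollary~\ref{cor:criterion} asymptotically in the scaled variables $s=q/d$ and $\alpha=a/d$.

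\emph{Upper bound.} Suppose $(d_j,a_j,q_j)$ satisfy the criterion. Then $h_Z(q_j;m_j)>0$ with $m_j=q_j-d_j+a_j+1\ge1$, so $q_j\ge\alpha(I_Z^{(m_j)})\ge\omega m_j$. The second inequality holds because $\widehat\alpha$ is an infimum: $\alpha(I_Z^{(m)})\ge m\,\widehat\alpha$ for every $m$. This gives
\[
 q_j\ge\omega(q_j-d_j+a_j+1),\qquad\text{so}\qquad \frac{a_j}{d_j}\le 1-\frac{(\omega-1)q_j}{\omega d_j}.
\]
Next we need a lower bound on $q_j/d_j$ coming from $c_{n,d_j,r}(q_j)\le0$. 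Pass to a subsequence on which $a_j/d_j$ tends to the limsup and $s_j=q_j/d_j\to s$. The case $s=\infty$ is excluded because $q_j\ge\omega m_j$ forces $q_j\le\frac{\omega}{\omega-1}d_j$, and this is $\le2d_j$ since $\omega\ge2$. Assume first that $s<2$. Then only the two-term form \eqref{eq:two-term} is relevant, and dividing by $d_j^N/N!$ gives $s^N\le r(s-1)^N+o(1)$ in the limit, hence $s\ge R/(R-1)$. The boundary case $s=2$ is even more favorable for the inequality we want, because the bound on $a/d$ is decreasing in $s$. Therefore
\[
 \limsup \frac{a_j}{d_j}\le 1-\frac{\omega-1}{\omega}\cdot\frac{R}{R-1}=\frac{R-\omega}{\omega(R-1)}=\eta.
\]

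\emph{Converse.} Fix $\varepsilon<\eta$. Since the inequality defining $\eta$ is strict, we can find rational $\sigma=k/m_0$ with $\omega\le\sigma<R$, close enough to $\omega$ that $\varepsilon<\frac{R-\sigma}{\sigma(R-1)}$. A form $B$ of degree $k$ and multiplicity $m_0$ at the points exists because $\widehat\alpha$ is a limit (take $m_0$ large). If $\sigma<2$, replace $B$ by a multiple to reach ratio $2$, as in Corollary~\ref{cor:waldschmidt}; this is harmless since $\omega\ge2$ makes the target ratio at least $2$ anyway. Now the proof of Theorem~\ref{thm:persistence} applies verbatim with $a=\lfloor\varepsilon d\rfloor$ and any integer $q\in[\lambda_-d,\lambda_+d]$. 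That proof exhibits $B^tT_u\ne0$ inside $[I_Z^{(m)}]_q$, which gives $h_Z>0$, and it shows $c_{n,d,r}(q)<0$. These are exactly the hypotheses of Corollary~\ref{cor:criterion}.

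The main obstacle is the limit step in the upper bound. We have to justify that $c_{n,d,r}(q)\le0$ forces $\liminf q/d\ge R/(R-1)$ uniformly, including the possibility that $q_j/d_j$ drifts toward $1$ or exceeds $2$, where more terms of \eqref{eq:coefficient} enter. Near $s=1$ the leading term $s^N-r(s-1)^N$ is positive and dominates, so this region is excluded. For $s\ge2$ the inequality $q\le\frac{\omega}{\omega-1}d\le2d$ already rules out $s>2$, so only the two-term regime matters.
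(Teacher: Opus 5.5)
Your proof is correct and follows essentially the same route as the paper. In both arguments, $q\ge\omega m$ comes from the infimum definition of $\widehat\alpha(Z)$. A subsequence with $q_j/d_j\to\lambda\in[1,2]$ then turns the two-term inequality into $\lambda\ge R/(R-1)$. For the converse, a form $B$ with ratio $\sigma$ close to $\omega$ is fed into the proof of Theorem~\ref{thm:persistence}.

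The one cosmetic difference is your separate treatment of the limit $s=2$, which is unnecessary. Since $q_j\le\frac{\omega}{\omega-1}(d_j-1)<2d_j$ for every $j$, \eqref{eq:two-term} holds exactly along the whole sequence. The limiting inequality $\lambda^{n-1}\le r(\lambda-1)^{n-1}$ therefore already covers $\lambda=2$.
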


\begin{proof}
For a triple in the first assertion, write $m=q-d+a+1$. Nonemptiness and
the definition of $\omega$ give $q\ge\omega m\ge2m$, so $d\le q<2d$ and
\[
 \frac ad\le1-\frac1d-\frac{\omega-1}{\omega}\frac qd.
\]
Choose a subsequence realizing the limit superior of $a/d$, and then a
further subsequence along which $q/d\to\lambda\in[1,2]$.
Dividing \eqref{eq:two-term} by $d^{n-1}$ and taking limits gives
$\lambda^{n-1}\le r(\lambda-1)^{n-1}$. Hence
$\lambda\ge R/(R-1)$, and substitution in the preceding inequality yields
the required bound.

For the converse, choose $B$ whose ratio $\sigma$ is sufficiently close
to $\omega$ that
$\varepsilon<(R-\sigma)/(\sigma(R-1))$.
The definition of the infimum ensures such a choice, with
$2\le\sigma<R$. The construction in the proof of
Theorem~\ref{thm:persistence} gives a nonzero uniform fat-point system
and a negative coefficient for every sufficiently large $d$.
\end{proof}

\section{Plane curves and a spatial quadric}\label{sec:examples}

The following construction suffices for all the applications. At a point
of $\PP^{n-1}$, multiplicity at least $v$ imposes at most
$\binom{v+n-2}{n-1}$ linear conditions on forms of a fixed degree.

\begin{lemma}\label{lem:products}
For each $i=1,\ldots,r$, suppose that there is a nonzero degree-$s$ form
$C_i$ with multiplicity at least $u$ at $P_i$ and at least $v$ at all the
other points. Then $B=\prod_i C_i$ has degree $rs$ and multiplicity at
least $u+(r-1)v$ at every point.
Such $C_i$ exist whenever
\[
 \binom{s+n-1}{n-1}>
 \binom{u+n-2}{n-1}+(r-1)\binom{v+n-2}{n-1},
\]
where the summand for multiplicity zero is understood to be zero.
\end{lemma}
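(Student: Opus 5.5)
The lemma makes two claims, and I will prove them separately: the multiplicity claim for the product $B$, and the existence claim for the $C_i$ by a dimension count.

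For the first claim, the degree is immediate, since $\deg B=\sum_i\deg C_i=rs$, and $B\neq0$ because $T$ is a domain. For the multiplicity, I will use that the order of vanishing at a point is additive under multiplication. In affine coordinates centred at $P_j$, write each $C_i$ as a sum of homogeneous components. The lowest-degree component of a product is the product of the lowest-degree components, and that product is nonzero in a polynomial ring. So $\operatorname{mult}_{P_j}(C_1\cdots C_r)=\sum_i\operatorname{mult}_{P_j}C_i$. At $P_j$, the factor $C_j$ contributes at least $u$ and each of the other $r-1$ factors contributes at least $v$. This gives the bound $u+(r-1)v$ at every point. Only the inequality "at least" is needed, and it follows from the superadditivity part alone, namely that the product of the lowest-degree components has degree at least the sum of the orders.

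For existence, fix $i$ and consider the linear subspace of $T_s$ consisting of forms with multiplicity at least $u$ at $P_i$ and at least $v$ at each $P_j$, $j\neq i$. The key step is the count of conditions recalled just before the lemma. Vanishing to order at least $w\ge1$ at a point of $\PP^{n-1}$ means that all partial derivatives of order $w-1$ vanish there; by Euler's formula, it is enough that the order-$(w-1)$ derivatives vanish. This imposes at most $\dim T_{w-1}=\binom{w+n-2}{n-1}$ linear conditions on $T_s$. Equivalently, after dehomogenizing at the point, it kills all affine monomials of degree $<w$, and there are $\binom{w-1+n-1}{n-1}$ of them. For $w=0$ there is no condition, which matches the stated convention. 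Hence the subspace has dimension at least
\[
 \binom{s+n-1}{n-1}-\binom{u+n-2}{n-1}-(r-1)\binom{v+n-2}{n-1}>0,
\]
so a nonzero $C_i$ exists.

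The only point needing care is the count of conditions per fat point, together with the zero-multiplicity convention. I will verify it directly via derivatives as above, so I do not expect a genuine obstacle. No generality of the points is needed, because the estimate is an upper bound on the number of linear conditions, valid for arbitrary distinct points.
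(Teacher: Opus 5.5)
Your proposal is correct and follows the paper's own two-line proof: the displayed inequality leaves a nonzero solution to the linear vanishing conditions, and multiplicities add under multiplication. One wording slip: multiplicity at least $w$ means that all partial derivatives of order at most $w-1$ vanish at the point, and Euler's formula reduces this to those of order exactly $w-1$.
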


\begin{proof}
The displayed inequality leaves a nonzero solution to the homogeneous
linear vanishing conditions. Multiplicities add under multiplication.
\end{proof}

For five points of $\PP^2$, take a conic through them. For six points,
multiply the six conics obtained by omitting one point in turn. For seven
points, take cubics with a double point at one specified point and simple
points at the other six. There are $10$ coefficients and at most $3+6=9$
conditions. For eight points, take sextics with multiplicities $3,2,\ldots,2$;
there are $28$ coefficients and at most $6+7\cdot3=27$ conditions. Multiplying
the seven cubics or eight sextics gives the third and fourth plane rows of
Table~\ref{tab:geometry}. Finally, a quadric in $\PP^3$ through nine points
exists because the space of quadrics has dimension $10$.

\begin{table}[ht]
\centering
\caption{Forms supplying the persistence criterion. The last column is
the strict upper bound for $\varepsilon$ in \eqref{eq:epsilon}, rounded
for orientation; the exact value is given by that formula.}
\label{tab:geometry}
\begin{tabular}{@{}ccrrcc@{}}
\toprule
$n$&$r$&$k$&$m_0$&$\sigma=k/m_0$&$\varepsilon_*$\\
\midrule
3&5&2&1&$2$&$0.095492$\\
3&6&12&5&$12/5$&$0.014226$\\
3&7&21&8&$21/8$&$0.004803$\\
3&8&48&17&$48/17$&$0.000949$\\
4&9&2&1&$2$&$0.037073$\\
\bottomrule
\end{tabular}
\end{table}

\begin{proof}[Proof of Corollary~\ref{cor:mainfamilies}]
All rows have $k\ge2m_0$. The strict inequalities needed in
Theorem~\ref{thm:persistence} are
\[
 2^2<5,\quad 12^2<6\cdot5^2,\quad 21^2<7\cdot8^2,
 \quad 48^2<8\cdot17^2,\quad 2^3<9.
\]
The constructions above work for general points, indeed for arbitrary
distinct points. For a partition having a part at least $d-a$, extract
that linear power and apply Theorem~\ref{thm:persistence} with a fixed
positive $\varepsilon$ and $d$ large enough that $a\le\varepsilon d$.
\end{proof}

For general plane points, the conics, cubics, and sextics in this construction
are the familiar curves of self-intersection $-1$ on the corresponding
blowups. The proof only requires a nonzero form with the indicated
multiplicities. In particular, it does not infer uniqueness or independence
of conditions from a dimension count.

\subsection{Explicit finite thresholds}

Although Theorem~\ref{thm:persistence} applies in every sufficiently large
degree, a single power of $B$ gives convenient explicit examples.

\begin{proposition}\label{prop:finitefamilies}
For any row of Table~\ref{tab:geometry}, fix $a\ge0$ and an integer $t\ge1$
with $m_0t>a$. Set
\[
 q=kt,\qquad d=(k-m_0)t+a+1.
\]
If
\begin{equation}\label{eq:threshold}
 \binom{kt+n-1}{n-1}\le r\binom{m_0t-a+n-2}{n-1},
\end{equation}
then every ideal generated by forms $L_i^{d-a}H_i$, with $\deg H_i=a$,
has positive Hilbert function in degree $q$, while $[t^q]G_{n,d,r}=0$.
\end{proposition}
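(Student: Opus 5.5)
The plan is to apply Corollary~\ref{cor:criterion} directly, with $q=kt$ and $d=(k-m_0)t+a+1$, and to supply the fat-point form by a single power of $B$. First I record the basic arithmetic. From $m=q-d+a+1$ we get $m=kt-(k-m_0)t-a-1+a+1=m_0t$. The form $B^t$ has degree $kt=q$, is nonzero, and vanishes to order at least $m_0t=m$ at every $P_i$. Hence $h_Z(q;q-d+a+1)\ge1$. By Proposition~\ref{prop:comparison} (in the form \eqref{eq:fat-bound}, with $a_i=a$ for all $i$), $\HF_{S/I}(q)\ge1$ for every choice of the $H_i$. We also need $d\ge1$ and $a<d$. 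Both hold because $d-a=(k-m_0)t+1\ge1$, using $k\ge 2m_0>m_0$, which is true for every row of Table~\ref{tab:geometry}. Finally $q\ge d$, since $q-d=m_0t-a-1\ge0$ by the hypothesis $m_0t>a$.

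Next I check the hypothesis $c_{n,d,r}(q)\le0$. I first verify that $q<2d$. Indeed $2d-q=(k-2m_0)t+2a+2>0$, because $k\ge2m_0$. Therefore only the $j=0,1$ terms of \eqref{eq:coefficient} survive, and the condition reduces to \eqref{eq:two-term}:
\[
\binom{q+n-1}{n-1}\le r\binom{q-d+n-1}{n-1}.
\]
Substituting $q=kt$ and $q-d=m_0t-a-1$ turns the right side into $r\binom{m_0t-a+n-2}{n-1}$. This is exactly \eqref{eq:threshold}. Here $m_0t-a-1\ge0$, so the binomial coefficients are the ordinary ones and the convention for negative arguments never comes into play. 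Corollary~\ref{cor:criterion} then gives $[t^q]G_{n,d,r}=0$ and $\HF_{S/I}(q)>0$.

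The argument is essentially bookkeeping. The only delicate point is making sure the two-term reduction is legitimate, which means confirming $d\le q<2d$ with the exact offsets. That rests on $k\ge2m_0$ for each row and on $m_0t>a$. I would also note that $B$ works for arbitrary distinct points, by the constructions preceding the table. So no generality assumption on the $L_i$ is needed beyond distinctness of the $P_i$, and none at all on the $H_i$.
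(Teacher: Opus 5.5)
Your proposal is correct and follows the paper's proof: the paper also uses $B^t$ as the witness, notes that $q-d+a+1=m_0t$ and that $q<2d$ because $k\ge2m_0$, and then applies Corollary~\ref{cor:criterion}. Your explicit checks that $q\ge d$ (from $m_0t>a$), that $a<d$, and that the right side of \eqref{eq:two-term} becomes \eqref{eq:threshold} fill in details the paper leaves implicit.
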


\begin{proof}
The form $B^t$ has degree $q$ and multiplicity at least $m_0t$ at the
points. Moreover $q-d+a+1=m_0t$ and $q<2d$, since $k\ge2m_0$.
Apply Corollary~\ref{cor:criterion}.
\end{proof}

For $(n,r)=(3,5)$ the threshold has a particularly simple form.

\begin{corollary}\label{cor:conic}
Let $a\ge0$ and
\begin{equation}\label{eq:conicthreshold}
 d\ge a+1+
 \left\lceil\frac{10a+1+\sqrt{80a^2+40a+9}}{2}\right\rceil.
\end{equation}
Then any five forms $F_i=L_i^{d-a}H_i\in\CC[x,y,z]_d$, with $\deg H_i=a$,
satisfy
\[
 \HF_{S/(F_1,\ldots,F_5)}(2d-2a-2)\ge1,
 \qquad [t^{2d-2a-2}]G_{3,d,5}=0.
\]
\end{corollary}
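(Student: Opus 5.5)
This is the row $(n,r,k,m_0)=(3,5,2,1)$ of Table~\ref{tab:geometry}, treated through Corollary~\ref{cor:criterion}. Rather than fixing $t$ as in Proposition~\ref{prop:finitefamilies}, we allow $d$ to be arbitrary and choose $q=2d-2a-2$. Let $C\in T_2$ be a nonzero conic through $P_1,\ldots,P_5$; it exists because the space of conics has dimension $6>5$. Put $m=q-d+a+1=d-a-1$. Then $C^{m}$ has degree $2m=q$ and vanishes to order at least $m$ at each $P_i$. Hence $h_Z(q;m)\ge1$ whenever $m\ge1$, and the bound \eqref{eq:fat-bound} gives $\HF_{S/I}(q)\ge1$. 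This uses no hypothesis on the $H_i$, and it does not matter which $a_i\le a$ occur.

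It remains to show that $[t^q]G_{3,d,5}=0$. First, $q<2d$, so by Corollary~\ref{cor:criterion} it suffices to verify \eqref{eq:two-term} with $n=3$, $r=5$:
\[
 \binom{q+2}{2}\le5\binom{q-d+2}{2}.
\]
Write $e=d-a-1=m$, so that $q=2e$ and $q-d+2=e-a+1$. We also need $q\ge d$, which is $e\ge a+1$ and is ensured by the hypothesis. The inequality becomes
\[
 (2e+2)(2e+1)\le5(e-a+1)(e-a),
\]
that is,
\[
 e^2-(10a+1)e+5a^2+5a-2\ge0 .
\]
The discriminant is $(10a+1)^2-4(5a^2+5a-2)=80a^2+40a+9$. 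The inequality therefore holds for
\[
 e\ge\frac{10a+1+\sqrt{80a^2+40a+9}}{2}.
\]
Since $e$ is an integer, this is exactly \eqref{eq:conicthreshold} after substituting $d=e+a+1$.

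Finally, the threshold gives $e\ge 10a+2>a$, so $m\ge1$ and $q\ge d$ both hold, and Corollary~\ref{cor:criterion} applies.

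The main obstacle is only the bookkeeping: reducing to the quadratic in $e$ and computing the discriminant correctly. A quick check at $a=0$: the bound gives $e\ge(1+3)/2=2$, and indeed $(6)(5)=30\le5\cdot3\cdot2=30$.
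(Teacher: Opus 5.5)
Your argument is essentially the paper's proof. The paper also sets $t=d-a-1$, takes $C^t$ as the witness for a conic $C$ through the five points, and reduces \eqref{eq:threshold} to $-t^2+(10a+1)t+2-5a(a-1)\le0$.

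Two slips need fixing, though neither affects the conclusion:
\begin{enumerate}
\item The constant term of your quadratic should be $5a^2-5a-2$, not $5a^2+5a-2$. The discriminant you state, $80a^2+40a+9$, is the one for the corrected quadratic; with $+5a$ it would be $80a^2+9$.
\item The claim $e\ge10a+2$ is false for $a\ge2$; for $a=2$ the threshold gives only $e\ge21$. However, $\sqrt{80a^2+40a+9}\ge3$ yields $e\ge5a+2>a$, which is all you need for $m\ge1$ and $q\ge d$.
\end{enumerate}
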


\begin{proof}
Put $t=d-a-1$. Twice the difference between the two sides of
\eqref{eq:threshold} is
$-t^2+(10a+1)t+2-5a(a-1)$, which is non-positive under
\eqref{eq:conicthreshold}. The witness is $C^t$ for a conic $C$ through the
five coefficient points. Such a conic exists even if the points are not
in general position.
\end{proof}

For five general plane points, $\widehat\alpha(Z)=2$. The conic gives the
upper bound. If a curve of degree $q<2m$ vanished to order $m$ at the five
points, B\'ezout would force the conic to be a component. Repeatedly
removing it would give a curve of negative degree, a contradiction.
Consequently Proposition~\ref{prop:optimal-criterion} shows that
\[
 \eta=\frac{3-\sqrt5}{8}
\]
is the exact asymptotic boundary for this uniform fat-point criterion.
The explicit bound \eqref{eq:conicthreshold} has the same limiting ratio
$a/d$.

For $a=1$ the first values furnished by the chosen forms $B$ are as follows.
These are thresholds for this construction, not claims of minimality for
Problem F, except in the first row by Theorem~\ref{thm:first}.
\begin{center}
\begin{tabular}{@{}ccrrr@{}}
\toprule
$n$&$r$&$t$&$d$&$q$\\
\midrule
3&5&12&14&24\\
3&6&12&86&144\\
3&7&18&236&378\\
3&8&36&1118&1728\\
4&9&26&28&52\\
\bottomrule
\end{tabular}
\end{center}
Each row follows by substituting in \eqref{eq:threshold}; the accompanying
certificate also verifies the integer thresholds. In the spatial example,
$Q^{26}$ survives in degree $52$, and
\[
 \binom{55}{3}-9\binom{27}{3}=26235-26325=-90.
\]

\subsection{The limitation imposed by Nagata's inequality}

The same degree--multiplicity comparison explains why these particular
constructions stop at eight plane points. It does not classify all possible
failures of Problem F.

\begin{proposition}\label{prop:nagata}
For nine general points of $\PP^2$, the hypotheses of
Corollary~\ref{cor:criterion} cannot hold for any $a\ge0$.
For $r\ge10$ very general points, the same is true for $a\ge1$ if
$\alpha(I_Z^{(m)})\ge m\sqrt r$ for every $m\ge1$.
In particular, the latter conclusion follows from Nagata's conjecture,
and is unconditional when $r\ge10$ is a perfect square.
\end{proposition}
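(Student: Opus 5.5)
The plan is to show that the two hypotheses of Corollary~\ref{cor:criterion}, namely $h_Z(q;m)>0$ with $m=q-d+a+1$ and $c_{n,d,r}(q)\le0$, are incompatible. The first forces $q$ to be large relative to $m$. The second forces $q$ to be large relative to $d$, hence $m$ large relative to $q$. Write $n=3$, $N=2$ throughout.

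\emph{Step 1: the range of $q$.} If a nonzero form of degree $q$ has multiplicity $m$ at the points, then $q\ge\alpha(I_Z^{(m)})$. For nine general points one has $\alpha(I_Z^{(m)})\ge3m$: the unique cubic through them is irreducible with self-intersection $0$ on the blowup, so B\'ezout against it gives $3q\ge9m$. In the second case the hypothesis gives $q\ge m\sqrt r$ directly. In both cases $q\ge m\sqrt r\ge 3m$.

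Since $m\ge1$ we get $q\ge 3(q-d+a+1)$, that is $2q\le 3d-3a-3<3d$, so $q<2d$. Hence $c_{3,d,r}(q)$ reduces to the two-term expression~\eqref{eq:two-term} with $n=3$, and the criterion says
\[
(q+2)(q+1)\le r\,(q-d+2)(q-d+1).
\]
Put $p=q-d+1=m-a$. If $p\le0$ the right side is $\le0$ (it is $0$ for $p\in\{0,-1\}$ and positive only for $p\le-2$, which would need $q\le d-3$, excluded since $q\ge d$). So $p\ge1$ and $q+1\le\sqrt r\,p$ roughly. The step to get right here is the strict square-root bound: since $q+1<q+2$, we get $(q+1)^2<(q+1)(q+2)\le r\,p(p+1)$.

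\emph{Step 2: the contradiction.} From Step 1, $q\ge\sqrt r\,m=\sqrt r\,(p+a)$.

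When $a\ge1$, this gives $q+1>\sqrt r\,(p+1)$, so $(q+1)^2>r(p+1)^2>r\,p(p+1)$, contradicting Step 1.

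When $a=0$ and $r=9$, use the exact inequality $(q+1)(q+2)\le 9p(p+1)$ with $q\ge3p$. The left side is at least $(3p+1)(3p+2)=9p^2+9p+2>9p(p+1)$, a contradiction.

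This is exactly why $a\ge1$ is needed for $r\ge10$. For non-square $r$ and $a=0$, the margin between $q\ge\sqrt r\,p$ and $(q+1)(q+2)\le r\,p(p+1)$ need not close, whereas for $r=9$ the integrality $q\ge3p$ closes it.

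\emph{Step 3: the "in particular" clause.} Nagata's conjecture for $r\ge10$ very general points asserts $\alpha(I_Z^{(m)})>m\sqrt r$, which gives the hypothesis. For $r=s^2$ with $s\ge4$, Nagata proved the conjecture, so the conclusion is unconditional.

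The main obstacle is justifying $\alpha(I_Z^{(m)})\ge3m$ for nine general points. Since this is the known Waldschmidt constant of nine general points, I would cite it rather than reprove it, sketching the B\'ezout argument against the irreducible cubic as above.
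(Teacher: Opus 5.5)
Your proposal is correct and follows essentially the paper's route. You bound $q\ge\sqrt r\,m$ (via B\'ezout against the cubic for nine points, or via the hypothesis/Nagata for $r\ge10$), deduce $q<2d$ so that the criterion reduces to the two-term inequality, and then contradict that inequality with the same binomial estimates. The differences are cosmetic. For nine points the paper handles all $a\ge0$ at once through $\binom{3m+2}{2}=9\binom{m+1}{2}+1$, whereas you split off $a=0$. The paper also removes the cubic inductively, whereas you invoke its nefness on the blowup.
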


\begin{proof}
For nine general points let $C$ be their smooth cubic. A degree-$q$ curve
with multiplicity at least $m$ at all nine points has $q\ge3m$.
Indeed, if $q<3m$, B\'ezout forces $C$ to be a component; removing it
reduces $(q,m)$ to $(q-3,m-1)$ and leads by induction to a negative degree.
For the criterion put $m=q-d+a+1$. The inequality $q\ge3m$ implies $q<2d$,
and
\[
 \binom{q+2}{2}\ge\binom{3m+2}{2}
  =9\binom{m+1}{2}+1>9\binom{m-a+1}{2}.
\]
Thus \eqref{eq:two-term} fails.

Under the stated inequality for $r\ge10$, nonemptiness gives
$q\ge m\sqrt r>2m$, again implying $q<2d$. If $a\ge1$, then
\[
 \binom{q+2}{2}>\frac{q^2}{2}\ge\frac{rm^2}{2}
                  >r\binom{m-a+1}{2}.
\]
The last assertion uses Nagata's theorem for square numbers of points
\cite{Nag59}; see also \cite{CHMR13}.
\end{proof}

The proposition excludes only the comparison with a nonempty uniform
fat-point system in a degree where the predicted series has vanished.
It says nothing about other sources of defect, or about an excess in a
degree where the predicted Hilbert function is positive.

\section{The finite boundary and further questions}\label{sec:boundary}

We separate the geometric part of Theorem~\ref{thm:first} from its finite
verification. This also isolates precisely what the computation proves.

\begin{proof}[Proof of Theorem~\ref{thm:first}]
If $F_i=L_i^{13}M_i$, choose a nonzero conic $C$ through the five coefficient
points. Its twelfth power has degree $24$ and multiplicity at least $12$
at each point. Proposition~\ref{prop:comparison} puts $C^{12}$ in
$(S/I)_{24}^\vee$. Meanwhile
\[
 c_{3,14,5}(24)=\binom{26}{2}-5\binom{12}{2}=325-330=-5.
\]
This proves the failure for every choice of the factors.

Appendix~\ref{app:certificate} gives a fixed integer specialization for
which, modulo $32003$, the maps \eqref{eq:multiplication} have ranks
\[
 \rank\Phi_{23}=275,\qquad \rank\Phi_{24}=324,
 \qquad \rank\Phi_{25}=351.
\]
Nonzero minors lift to characteristic zero. The first rank excludes any
syzygy in degree at most $23$; the last makes the quotient zero in every
degree at least $25$. The middle rank, together with the conic lower
bound, gives dimension exactly one in degree $24$. Semicontinuity proves
the claimed generic Hilbert series.

The same certificate proves $G_{3,d,5}$ for every two-part partition
$(d-a,a)$ with $2\le d\le13$ and $1\le a\le\lfloor d/2\rfloor$, and for
$d=14$, $2\le a\le7$. Proposition~\ref{prop:coarsening} then gives every
non-pure partition in degrees at most $13$. A non-pure partition of $14$
other than $(13,1)$ has a subset of parts with sum between $2$ and $12$,
so it is covered by one of the positive two-part cases as well.
\end{proof}

Corollary~\ref{cor:conic} gives a failure for $(d-1,1)$ for every $d\ge14$.
By Theorem~\ref{thm:persistence}, the excess eventually occupies an interval
of degrees and becomes arbitrarily large.

There are two natural questions beyond these lower bounds. The first asks
whether the elementary comparison already accounts for all the defect in
the simplest family.

\begin{question}\label{q:exact}
For five general forms $L_i^{d-1}M_i\in\CC[x,y,z]_d$ and $q\ge d$, is
\[
 \HF_{S/(L_1^{d-1}M_1,\ldots,L_5^{d-1}M_5)}(q)
 =\max\bigl\{[t^q]G_{3,d,5},\ h_Z(q;q-d+2)\bigr\}?
\]
\end{question}

Both entries on the right are lower bounds: the first follows from
Anick's theorem and semicontinuity, and the second from
Proposition~\ref{prop:comparison}. Theorem~\ref{thm:first} determines the
first exceptional Hilbert series but does not resolve this question in
arbitrary degree.

The second question concerns the size of the residual factor. For five
general ternary generators, $(3-\sqrt5)/8$ is the asymptotic boundary of
the uniform fat-point criterion. Does the actual Hilbert-function defect
persist for larger proportions $a/d$? Determining this would distinguish
the obstruction studied here from effects of the complete factorization.

\appendix
\section{The exact finite certificate}\label{app:certificate}

We give the data and the reduction that make the finite verification
reproducible. Its only role is the sharpness statement and the exact
Hilbert series in Theorem~\ref{thm:first}.

For $d\ge2$ let
\[
 Q_d=\min\left\{q\ge d:\binom{q+2}{2}
                            -5\binom{q-d+2}{2}\le0\right\}.
\]
We have $Q_d<2d$, since the difference at $q=2d-1$ is
$-d(d+3)/2$. Furthermore the ratio
$\binom{q+2}{2}/\binom{q-d+2}{2}$ decreases with $q\ge d$.
Thus $Q_d$ is the first zero degree of $G_{3,d,5}$.

\begin{lemma}\label{lem:boundarychecks}
For five forms of degree $d$, injectivity of $\Phi_{Q_d-1}$ and
surjectivity of $\Phi_{Q_d}$ imply Hilbert series $G_{3,d,5}$.
\end{lemma}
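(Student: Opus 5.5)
We need to show: if $\Phi_{Q_d-1}$ is injective and $\Phi_{Q_d}$ is surjective, then $\HF_{S/I}(q)=[t^q]G_{3,d,5}$ for every $q$. The plan is to split the degrees into the range $q<Q_d$ and the range $q\ge Q_d$. Throughout, $I=(F_1,\dots,F_5)$ with $\deg F_i=d$.

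\emph{Degrees $q\ge Q_d$.} Surjectivity of $\Phi_{Q_d}$ means $I_{Q_d}=S_{Q_d}$. Since $I$ is an ideal, $I_q\supseteq S_{q-Q_d}I_{Q_d}=S_q$ for all $q\ge Q_d$. Hence $\HF_{S/I}(q)=0$ there. This agrees with $G_{3,d,5}$, because $Q_d$ is the first non-positive coefficient of the untruncated series, and the truncation sets that coefficient and all later ones to zero. Here we use the remark preceding the lemma that $Q_d$ is the first zero degree; for $q<2d$ the coefficient $c_{3,d,5}(q)$ is the two-term expression.

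\emph{Degrees $q<Q_d$.} For $q<d$ both sides equal $\binom{q+2}{2}$. For $d\le q<Q_d$ we have $q<2d$, since $Q_d<2d$. It therefore suffices to show that $\Phi_q$ is injective, because then
\[
\HF_{S/I}(q)=\binom{q+2}{2}-5\binom{q-d+2}{2},
\]
which is the coefficient of $G_{3,d,5}$ in degree $q$ (it is positive by the definition of $Q_d$). Injectivity propagates downward: if $(A_1,\dots,A_5)\in\ker\Phi_q$ is nonzero with $q<Q_d-1$, then multiplying by $x_1^{Q_d-1-q}$ gives a nonzero element of $\ker\Phi_{Q_d-1}$, since $S$ is a domain. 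This contradicts the hypothesis.

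Once the monotonicity of the ratio and $Q_d<2d$ (both recorded before the lemma) are granted, no step is a genuine obstacle. The only care needed is to check that the truncated coefficients for $d\le q<Q_d$ are exactly the two-term values, with no $j\ge2$ contributions and no earlier non-positive coefficient. This holds because $q<2d$ and because $Q_d$ is the first $q\ge d$ at which the two-term value is non-positive, while coefficients in degrees below $d$ are $\binom{q+2}{2}>0$.
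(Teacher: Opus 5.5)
Your proof is correct and follows the paper's argument. A nonzero syzygy in a lower degree is multiplied by a monomial to land in $\ker\Phi_{Q_d-1}$, and $I_{Q_d}=S_{Q_d}$ propagates upward by multiplication by variables. Your explicit check that the coefficients of $G_{3,d,5}$ below $Q_d$ are the positive two-term values spells out what the paper leaves to the remark preceding the lemma. That check needs only $Q_d<2d$ and the minimality of $Q_d$, not the monotonicity of the ratio.
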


\begin{proof}
A nonzero homogeneous syzygy in an earlier degree can be multiplied by
a nonzero monomial to produce one in degree $Q_d-1$. Hence there are no
syzygies up to that degree, and the Hilbert function there is the
difference of the source and target dimensions. If $I_{Q_d}=S_{Q_d}$,
then multiplication by variables gives $I_q=S_q$ for every $q\ge Q_d$.
\end{proof}

Use the following coefficient vectors over $\mathbb Z$, and form
$F_i=L_i^{d-a}M_i^a$:
\[
\begin{array}{c|c}
L_i&M_i\\\hline
(0,7,2)&(9,8,5)\\
(2,6,4)&(2,10,7)\\
(0,8,0)&(2,0,6)\\
(8,1,7)&(8,7,0)\\
(7,9,6)&(4,6,9)
\end{array}
\]
Rows of $\Phi_q$ are indexed by degree-$q$ monomials; columns are the
coefficient vectors of all $AF_i$ for degree-$(q-d)$ monomials $A$.
The file \texttt{verify\_manuscript.py} constructs these matrices and
performs Gaussian elimination modulo the prime $32003$. No random data
or floating-point rank decisions are used.

For every $2\le d\le13$ and $1\le a\le\lfloor d/2\rfloor$ the two
boundary ranks are maximal. The same holds for $d=14$, $2\le a\le7$.
The cutoffs are
\[
\begin{array}{c|rrrrrrrrrrrrr}
d&2&3&4&5&6&7&8&9&10&11&12&13&14\\\hline
Q_d&3&4&6&8&10&12&13&15&17&19&21&23&24.
\end{array}
\]
There are $96$ maximal-rank checks for these positive cases. The three
additional checks for $(d,a)=(14,1)$ are
\[
\begin{array}{c|rrr}
q&23&24&25\\\hline
\text{number of rows}&300&325&351\\
\text{number of columns}&275&330&390\\
\text{rank modulo }32003&275&324&351.
\end{array}
\]
The file \texttt{verification/rank\_checks.csv} records all $99$ ranks and
matrix sizes. The verifier runs with the Python standard library; optional
NumPy acceleration gives the same results. Both implementations have been
checked on all $99$ matrices. Use \texttt{--pure-python} to disable acceleration.

A nonzero minor modulo $32003$ remains nonzero over $\CC$, so the
maximal-rank checks prove generic characteristic-zero statements.
For the exceptional rank $324$, the conic argument supplies the matching
upper bound: reduction modulo a prime alone would not certify a
characteristic-zero rank defect.

\section*{Acknowledgements}
The author thanks Ralf Fr\"oberg for many discussions of these questions
and for their long collaboration.

\section*{Use of AI tools}
AI tools, including ChatGPT/Codex and Claude, were used to assist with
mathematical exploration, literature searches, computational verification,
and revision of the exposition. The author is responsible for the final
verification and presentation of the results.

\end{document}